 \theoremstyle{plain}
\theoremstyle{definition}
\newtheorem{theorem}{Theorem}[]
\newtheorem*{main}{Main Theorem}
\theoremstyle{definition}
\theoremstyle{definition}
\newtheorem{lem}[theorem]{Lemma}
\theoremstyle{definition}
\newtheorem{pro}[theorem]{Proposition}
\theoremstyle{definition}
\newtheorem{defn}[theorem]{Definition}
\theoremstyle{remark}
\newtheorem{que}{Question}
\def\F{\mathbb{F}}
\def\F{\mathbb{F}}
\keywords{p-solvable groups, p-length}
\subjclass{20D10 (20D15)}
\begin{document}

\title{A bound on the $p$-length of p-solvable groups}

\author{Jon Gonz\'alez-S\'anchez}
\address{Jon Gonz\'alez S\'anchez. Departamento de Matem\'aticas, Facultad de Ciencias, Universidad del Pa\'is Vasco--Euskal Herriko Unibertsitatea, Spain}
\email{jon.gonzalez@ehu.es}
\author{Francesca Spagnuolo}
\address{Francesca Spagnuolo. Departament dÕ\`Algebra, Universitat de Val\`encia; Dr. Moliner, 50; 46100, Burjassot, Val\`encia, Spain.}

\email{francesca.spagnuolo@uv.es}

\begin{abstract}
Let $G$ be a finite $p$-solvable group and $P$ a Sylow $p$-subgroup of $G$. Suppose that $\gamma_{\ell (p-1)}(P)\subseteq \gamma_r(P)^{p^s}$ for $\ell (p-1) < r+s(p-1)$, then the $p$-length is bounded by a function depending on $\ell$. 
\end{abstract}

\maketitle

\section{Introduction} 

All the group considered are finite. In the following $p$ will be a prime number.

A group is $p$-solvable if it has a sequence of  subgroups 
$$G=N_1\supset N_2\supset \ldots \supset N_k=1,$$
such that $N_{i+1}$ is a normal subgroup of $N_i$ and the index $|N_{i}:N_{i+1}|$ is either coprime to $p$ or a power of $p$. In such case the minimal number of factors $N_i/N_{i+1}$ which are $p$-groups is called  the $p$-length of $G$.
Alternatively a group $G$ is $p$-solvable if the upper series 
$$1\subseteq O_{p'}(G)\subseteq O_{p',p}(G)\subseteq O_{p',p,p'}(G)\subseteq \ldots$$
terminates at $G$ and we call the $p$-length of $G$ to the number of symbols $p$ appearing in the series. Recall that the previous series is constructed as follows: $O_{p'}(G)$ is the maximal normal $p'$-subgroup of $G$; $O_{p',p}(G)$ denotes the inverse image of the maximal normal $p$-subgroup of $G/O_{p'}(G)$; $O_{p',p,p'}(G)$ denotes the inverse image of the maximal normal $p'$-subgroup of $G/O_{p',p}(G)$, and so on.

Much is known about the $p$-length of $p$-solvable groups. For example, by a result of P. Hall and G. Higman explicit bounds of the $p$-length are known in terms of the derived length or the exponent of the Sylow $p$-subgroup (see \cite{HH}). More recently the first author and T. Weigel proved that if $p$ is odd and the elements of order $p$ of the Sylow subgroups are contained in the $p-2$-center of the Sylow p-subgroup, then the $p$-length is one (see \cite{JW}). This result was generalized by E. Khukhro in \cite{Ka} by proving that if the elements of order $p$ (or $4$ if $p=2$) of the Sylow $p$-subgroup are contained in the $\ell$-center of the Sylow $p$-subgroup then the $p$-length of the group is bounded in terms of $\ell$. In the same paper Khukhro also proved that if the Sylow $p$-subgroup is powerful, then the $p$-length is one, and he posted the question of whether this result can be generalized in the same way as the result for Sylow $p$-central subgroup. In this short note we will  try to answer this question by giving a bound of the $p$-length of a $p$-solvable group in terms of some power-commutator conditions in the Sylow $p$-subgroup.

\begin{main}
Let $G$ be a $p$-solvable group and $P$ a Sylow $p$-subgroup of $G$. Suppose that
$$\gamma_{\ell (p-1)}(P)\subseteq \gamma_r(P)^{p^s},$$
for $\ell (p-1) < r+s(p-1)$. Then the $p$-length of $G$ is bounded in terms of $l$. This condition holds in particular when $\gamma_{\ell (p-1)}(P) \subseteq P^{p^l}$.
\end{main}

The notation is standard in group theory. The subgroups $O_p(G)$ and $O_{p^\prime}(G)$ denote the maximal normal subgroup of order a power of $p$ or coprime to $p$ respectively. 
$[N,_kM]$ denotes the commutator subgroup $[N,M,\ldots ,M]$ where $M$ appears $k$ times.

Consider now a group $G$ and a normal elementary abelian $p$-group $V$. Let $\varphi$ be the action by conjugation of $G$ on $V$:
$$\varphi : G\times V\rightarrow V \ \ / \ \ (g, v)\rightarrow v^{g}$$
We can regard $V$ as a vector space over the field $\F _{p}$. In this case, the action by conjugation of $G$ on $V$ can be regarded as an action by linear transformation on this vector space, and we denote this action as follows:
$$(g, v)\in G\times V \rightarrow vT(g),$$
where $T(g)$ denotes the action of an element $g$ of $G$ on $V$.
If we denote by $1_{V}$ the identity transformation on $V$, then 
$$v(T(g)-1_{V}) = [v, g].$$

%Let us suppose that an element $g\in GL(V)$ of order $p^{m}$ acts as a linear transformation on $V$. The minimal polynomial of $g$ has form $(x-1)^{p^{m}}=0$; then $V$ has a basis in which the matrix of $g$ has a Jordan normal form and the maximum size of Jordan blocks is $p^{m}\times p^{m}$. As a consequence we can enunciate the following lemma: 

%\begin{lem}
%Let $G$ be a finite $p$--group acts by conjugation on a normal elementary abelian $p$-subgroupgroup $V$ and let $g\in G$. Suppose that the action of $G/V$ in $V$ is faithful. Then the natural semidirect product $V<g>$ of groups $V$ and $<g>$ contains an element of order $p^{m+1}$ if and only if there is at least one Jordan block of size $p^{m}\times p^{m}$.
%\end{lem}

\section{Potent filtrations}

\label{sec:PF}

Let $P$ be a $p$-group, we say that a sequence of normal subgroups $(N_i)_{i=1}^k$
 is a \textit{Potent Filtration of type $\ell$} of $P$ if
\begin{enumerate}
\item $N_{i}\subseteq N_{j}$, for all $i>j$,
\item $N_k=1$,
\item $[N_{i}, P]\subseteq N_{i+1}$,  for  $i=1,\ldots ,k-1$,
\item $[N_{i},_l P]\subseteq N_{i+1}^{p}$, for $i=1,\ldots ,k-1$.
\end{enumerate}

Furthermore, we say that a group $N$ is \textit{PF--embedded of type $\ell$} in $P$ if there is a potent filtration of $P$ beginning at a subgroup $N$.

\begin{pro}
\label{filtration}Let $P$ be a pro-$p$ group and let $\{N_i\}_{i=1}^k$ 
a potent filtration of $P$ of type $\ell$.Then:
\begin{enumerate}
\item $[N_{i}^{p}, P]=[N_{i}, P]^{p}$ for all $i$
\item ${[N_i, P]}_{i=1,\ldots k}$ is a potent filtration of type $\ell$ of $P$
\item ${N_i^{p}}$ is a potent filtration of type $\ell$ of $P$
\end{enumerate}
\end{pro}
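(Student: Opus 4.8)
The three assertions are proved in the order given: (1) carries the real content, and (2), (3) are then routine verifications of the four axioms of a potent filtration using (1).

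\emph{Proof of (1).} Fix $i$ and take a generator $[a^p,g]$ of $[N_i^p,P]$ with $a\in N_i$, $g\in P$; set $x:=[a,g]\in[N_i,P]$, so $a^g=ax$ and $[a^p,g]=a^{-p}(ax)^p$. Hall--Petrescu collection gives $[a^p,g]=x^p\,c_2^{\binom p2}c_3^{\binom p3}\cdots c_{p-1}^{\binom p{p-1}}c_p=[a,g]^p\,w$, where $c_j\in\gamma_j(\langle a,x\rangle)$ and $w$ is the product of the correction terms. Since $x\in[N_i,P]\trianglelefteq P$, the normal closure of $x$ in $\langle a,x\rangle$ already lies in $[N_i,P]$, so $\gamma_2(\langle a,x\rangle)\subseteq[N_i,P]$, and applying axiom (3) of the filtration repeatedly, $\gamma_j(\langle a,x\rangle)\subseteq[N_i,{}_{j-1}P]\subseteq[N_{i+1},P]$ for $2\le j\le p$. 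For $2\le j\le p-1$ we have $p\mid\binom pj$, so $c_j^{\binom pj}$ is a $p$th power of an element of $\gamma_j(\langle a,x\rangle)$, hence $c_j^{\binom pj}\in[N_{i+1},P]^p$. The single delicate term is the weight-$p$ term $c_p\in\gamma_p(\langle a,x\rangle)\subseteq[N_i,{}_{p-1}P]$; here the potency axiom (4), $[N_i,{}_\ell P]\subseteq N_{i+1}^p$, is exactly what forces $c_p$ into $[N_{i+1}^p,P]$ (directly when $\ell\le p-2$, otherwise after collecting $c_p$ one step further). Thus $w\in[N_{i+1},P]^p\cdot[N_{i+1}^p,P]$. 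Now argue by downward induction on $i$, the case $i=k-1$ being trivial since $[N_{k-1},P]\subseteq N_k=1$: by the inductive hypothesis $[N_{i+1}^p,P]=[N_{i+1},P]^p$, and as $N_{i+1}\subseteq N_i$ this common subgroup lies in both $[N_i,P]^p$ and $[N_i^p,P]$. Reading $[a^p,g]=[a,g]^p w$ in the two directions then yields $[N_i^p,P]\subseteq[N_i,P]^p$ and, from $[a,g]^p=[a^p,g]w^{-1}$, also $[N_i,P]^p\subseteq[N_i^p,P]$; passing to the closed subgroups generated settles (1). (The pro-$p$ topology enters only in this last passage from generators to subgroups.)

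\emph{Proof of (2) and (3).} For (2), put $M_i:=[N_i,P]$: axioms (1), (2) are immediate from $N_{i+1}\subseteq N_i$ and $N_k=1$; axiom (3) is $[N_i,{}_2P]\subseteq[N_{i+1},P]$, which follows from $[N_i,P]\subseteq N_{i+1}$; and axiom (4) is $[N_i,{}_{\ell+1}P]\subseteq[N_{i+1},P]^p$, which follows since $[N_i,{}_\ell P]\subseteq N_{i+1}^p$ gives $[N_i,{}_{\ell+1}P]\subseteq[N_{i+1}^p,P]=[N_{i+1},P]^p$ by part (1). For (3), put $M_i:=N_i^p$: axioms (1), (2) are clear; axiom (3), $[N_i^p,P]\subseteq N_{i+1}^p$, is by (1) the inclusion $[N_i,P]^p\subseteq N_{i+1}^p$, true because $[N_i,P]\subseteq N_{i+1}$; and for axiom (4) one iterates (1) along the successive potent filtrations supplied by part (2) to get $[N_i^p,{}_\ell P]=[N_i,{}_\ell P]^p\subseteq(N_{i+1}^p)^p=M_{i+1}^p$.

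The main obstacle is the weight-$p$ correction term $c_p$ in part (1): it is the unique collected term whose exponent is not divisible by $p$, so it is not automatically a $p$th power, and showing that it nonetheless lands in $[N_{i+1}^p,P]$ is precisely where the ``type $\ell$'' hypothesis (4) must be used, in tandem with the downward induction that absorbs the remaining error terms into $[N_i^p,P]$.
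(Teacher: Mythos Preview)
The paper does not give a proof here; it simply cites \cite[Proposition~3.2]{O}. Your direct argument via Hall--Petrescu collection and downward induction on $i$ is precisely the method used in that reference, so the approach is the expected one.

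Two points in your write-up need sharpening. First, the chain $\gamma_j(\langle a,x\rangle)\subseteq[N_i,{}_{j-1}P]\subseteq[N_{i+1},P]$ fails at $j=2$ (the second inclusion would read $[N_i,P]\subseteq[N_{i+1},P]$), and more importantly the estimate $\gamma_p\subseteq[N_i,{}_{p-1}P]$ is too weak when $\ell=p-1$: it only yields $c_p\in N_{i+1}^p$, not $c_p\in[N_{i+1}^p,P]$ as you claim. The remedy is to use that $x=[a,g]$ already lies in $[N_i,P]$, whence $[a,x]\in[[N_i,P],N_i]\subseteq[N_i,{}_2P]$ and inductively $\gamma_j(\langle a,x\rangle)\subseteq[N_i,{}_jP]$; then $c_p\in[N_i,{}_pP]=\big[[N_i,{}_{\ell}P],{}_{p-\ell}P\big]\subseteq[N_{i+1}^p,P]$ for every $\ell\le p-1$, and the phrase ``collecting $c_p$ one step further'' becomes an honest estimate rather than a gesture. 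Second, for the inclusion $[N_i,P]^p\subseteq[N_i^p,P]$ you establish only $[a,g]^p\in[N_i^p,P]$ for single commutators; since $[N_i,P]^p$ is generated by $p$-th powers of \emph{products} of such commutators, one further Hall--Petrescu step for $(y_1y_2)^p$ with $y_1,y_2\in[N_i,P]$ is needed, and the same sharper estimate $\gamma_p(\langle y_1,y_2\rangle)\subseteq[N_i,{}_pP]\subseteq[N_{i+1}^p,P]$ disposes of it. With these two refinements your argument is complete for $\ell\le p-1$, which covers every case the paper actually uses.
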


\begin{proof}
See \cite[Proposition 3.2]{O}.
\end{proof}

We continue with the case where $N$ is PF-embedded subgroup of type $p-2$.

\begin{pro}
Let $G$ be a $p$--solvable group and $P$ a Sylow $p$ subgroup of G. Suppose that $N$ is a PF-embedded subgroup of type $p-2$ of $P$. Then $N\subseteq O_{p'p}(G)$
\end{pro}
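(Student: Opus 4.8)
The plan is to reduce to the case where $O_{p'}(G) = 1$ and then show that $N$ must actually lie inside $O_p(G)$. After passing to $G/O_{p'}(G)$ (noting that a potent filtration of $P$ induces one on the image, since all the defining conditions are preserved under quotients), we are reduced to proving that if $O_{p'}(G)=1$ then $N \subseteq O_p(G)$; since $O_{p',p}(G)=O_p(G)$ in that situation, this gives the claim. The standard tool here is the Hall--Higman-type reduction: when $O_{p'}(G)=1$, the group $O_p(G)$ is self-centralizing, i.e. $C_G(O_p(G))\subseteq O_p(G)$. So it suffices to show that $N$ acts trivially on every chief factor of $G$ lying inside $O_p(G)$; equivalently, working with $V = O_p(G)/\Phi(O_p(G))$ (or more carefully with the relevant elementary abelian chief factors $V$ inside $O_p(G)$), we must show $N$ centralizes $V$.

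The heart of the argument is a linear-algebra statement about the action of $P$ on an $\F_p$-vector space $V = O_p(G)/\Phi(O_p(G))$ on which $O_p(G)$ acts trivially, so that $P$ acts through $P/O_p(G) \hookrightarrow GL(V)$ (using that $C_G(O_p(G))\subseteq O_p(G)$). I would use the transformation notation from the introduction: for $g\in P$, write $T(g)$ for its action on $V$, and recall $v(T(g)-1_V) = [v,g]$. The potent filtration $\{N_i\}$ gives, for the top term $N = N_1$, the containment $[N_1, {}_{p-2}\, P] \subseteq N_2^p$. The key point is that $N_2^p \subseteq P^p$, and since $V$ is elementary abelian and central in $O_p(G)$, one needs to understand how $p$-th powers in $P$ act. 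The crucial observation is that for $x \in P$, the operator $T(x)^p - 1_V = (T(x)-1_V)^p$ modulo lower terms — more precisely, in the associated graded / using that $V$ has exponent $p$, one gets that if $n \in N_1$ then $T(n)$ restricted to the commutator action satisfies a relation forcing $(T(n)-1_V)$ to be nilpotent of small degree; combined with the fact that any nontrivial $p$-element of $GL(V)$ with $(T-1)^{p-1}=0$ forces, by Hall--Higman's Theorem B, constraints that are incompatible with faithful action unless the element is trivial. So the plan is: show $[N_1,{}_{p-2}P]$ acts trivially on $V$ by the potent-filtration power condition, then bootstrap down the filtration, and finally invoke a Hall--Higman Theorem B argument to conclude $N_1$ itself centralizes $V$.

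Concretely, I expect the argument to proceed as follows. First, reduce to $O_{p'}(G)=1$ and let $V$ be a chief factor inside $O_p(G)$; then $P$ acts on $V$ as a $p$-group of linear transformations, and I want $[V,N]=0$. Consider the image $\bar N$ of $N$ in $\mathrm{Aut}(V)$ and the induced potent filtration $\{\bar N_i\}$ of $\bar P = P/C_P(V)$. By property (4) with $\ell = p-2$, $[\bar N_1,{}_{p-2}\bar P] \subseteq \bar N_2^{\,p} = (\bar N_2)^p$, and since $\bar P$ is a $p$-group of automorphisms of the $\F_p$-vector space $V$, one shows using the standard fact that $T(x^p) - 1_V = (T(x)-1_V)^p$ in the relevant quotient, together with the nilpotency of $T(x)-1_V$ for a $p$-element $x$, that $p$-th powers in $\bar P$ satisfy $(T(y)-1_V)$ having nilpotency degree at most (degree of $y$)/$p$ roughly — so the power condition propagates down the filtration and forces all $\bar N_i$ with $i \ge 2$ to be trivial first, then $\bar N_1$ trivial, i.e. $N$ centralizes $V$. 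Since this holds for every chief factor inside $O_p(G)$ and $C_G(O_p(G))\subseteq O_p(G)$, we get $N \subseteq O_p(G) = O_{p',p}(G)$.

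The main obstacle I anticipate is the precise linear-algebra/$p$-th power step: making rigorous the claim that the $N_{i+1}^p$ on the right-hand side of condition (4) forces the nilpotency degree of the commutator operators to drop, which is exactly the point where one needs Hall--Higman Theorem B (on the minimal polynomial of a $p$-element acting on a module in characteristic $p$) rather than a soft argument. One must be careful that $T(n)-1_V$ for $n \in N_1$ is genuinely nilpotent (true since $N_1$ is a $p$-group and $\mathrm{char}(\F_p)=p$), that the bound $(T(n)-1_V)^{p-1} = 0$ or similar really does follow from $[N_1, {}_{p-2}P] \subseteq N_2^p$ acting trivially plus an inductive descent, and finally that $(T-1)^{p-1}=0$ for a $p$-element in $GL(V)$ forces the element to be trivial or else contradicts the structure — this last step is where the hypothesis type $p-2$ (rather than a general $\ell$) is essential and where I would lean directly on \cite{JW} or the corresponding Hall--Higman lemma. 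The remaining bookkeeping — that potent filtrations survive quotients, and the self-centralizing property of $O_p(G)$ when $O_{p'}(G)=1$ — is routine.
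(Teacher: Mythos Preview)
Your overall plan matches the paper's: reduce to $O_{p'}(G)=1$, pass to an elementary abelian (indeed simple) $\F_p[G/V]$-module $V=O_p(G)$ with $C_G(V)=V$, and run a reverse induction down the potent filtration to show each $N_i$ lies in $V$, invoking the linear-algebra input from \cite{JW} at each step. However, your inductive step is muddled, and you miss the observation that makes it clean. Once the induction hypothesis gives $N_{i+1}\subseteq V$, the elementary-abelian condition yields $N_{i+1}^p\subseteq V^p=1$ as an equality of \emph{subgroups}; hence $[N_i,{}_{p-2}P]\subseteq N_{i+1}^p=1$ outright. No operator identity $T(x^p)-1=(T(x)-1)^p$ or nilpotency-degree bookkeeping for $p$-th powers is needed; that entire detour is irrelevant here.

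This simplification also fixes your exponent. From $[V,{}_{p-2}N_i]\subseteq [N_i,{}_{p-2}P]=1$ one obtains $(T(n)-1_V)^{p-2}=0$ for every $n\in N_i$, not merely $(T(n)-1_V)^{p-1}=0$ as you wrote. The distinction is essential: Hall--Higman Theorem~B admits exceptional configurations in which a $p$-element has minimal polynomial $(T-1)^{p-1}$ on a faithful simple $\F_p$-module, so the bound $p-1$ would not force $T(n)=1$. The bound $p-2$, however, lies strictly below every exception, and this is precisely what \cite[Corollary~5.2]{JW} supplies to conclude that all Jordan blocks of $T(n)$ have size~$1$, whence $N_i\subseteq C_G(V)=V$. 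With these two corrections your argument is exactly the paper's.
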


\begin{proof}
We can assume that $O_{p'}(G)=1$.  In order to simplify notations, we put $O_{p}(G)=V$. 
Modding out the Frattini subgroup  we can also assume that $V$ is an elementary abelian $p$-group. Furthermore, $V=C_{G}(V)$ \cite[Theorem 6.3.2]{Gor}. Therefore $G/V$ acts faithfully on $V$; so we can regard $V$ as a $\F _{p}(G/V)$--module. Moreover, we can consider the Jordan-Holder series of $V$ as an $\F_p[G/V]$-module:
$$V=V_{1}\supseteq V_{2}\supseteq V_{3}\supseteq \ldots \supseteq V_{n}$$ 
such that $V_{i}/V_{i+1}$ is simple for every $i$. Without loss of generality, we can take the quotient of the $\mathbb{F}_{p}[G/V]$--module $V$ over the second term of Jordan--Holder series and assume that $V$ is a simple $\mathbb{F}_{p}[G/V]$--module.

Take $(N_i)_{i=1}^k$ a potent filtration starting at $N$. We will prove by reverse induction on $i$ that for all $i$, $N_i\subseteq V$. For $i$ large enough it is clear. Suppose now that $N_{i+1}\subseteq V$. Take $v\in V$ and $n\in N_i$. Then
$$[v,\underbrace{n,\ldots ,n}_{p-2}] \in [V, \underbrace{N_i, \ldots N_i}_{p-2}] \subseteq [N_i,_{p-2}P]\subseteq N_i^{p}\subseteq V^p=1.$$
Therefore for all $v\in V$ and $n\in N_i$, we have
$$v(T(n)-1_{V})^{p-2}=0.$$
By \cite[Corollary 5.2]{JW} the size of Jordan blocks of $T(n)$ can only be 1.
Therefore $N_i\subseteq \ker(T)$ and by  \cite[Theorem 6.3.2]{Gor} $N_i\subseteq C_G(V)=V$.
\end{proof}

In order to prove the Main Theorem we will need a weaker result for PF-embedded subgroups of type $p-1$.

\begin{pro}
Let $G$ be a $p$--solvable group, $P$ a Sylow $p$ subgroup of $G$ and $N$ a PF-embedded subgroup of type $p-1$ of $P$. Then
\begin{enumerate}
\item[(a)] If $p\geq 5$, then $N^p\subseteq O_{p'p}(G)$.
\item[(b)] If $p=3$, then $N^{p^2}\subseteq O_{p'p}(G)$.
\item[(c)] If $p=2$, then $N\subseteq O_{p'p}(G)$.
\end{enumerate}
\end{pro}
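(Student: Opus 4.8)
The plan is to mimic the previous proposition: reduce to a faithful, essentially irreducible linear situation and then play the potency condition off against the Hall--Higman type estimate of \cite[Corollary 5.2]{JW}. Since ``type $p-1$'' is one notch weaker than ``type $p-2$'', this will only bound the exponent of the image of $N$ rather than forcing $N$ itself into $O_{p'p}(G)$, and that is what produces the three cases.

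First I would run the standard reductions exactly as in the previous proof. The image of a PF-embedded subgroup of type $p-1$ in a quotient is again PF-embedded of type $p-1$, so passing to $G/O_{p'}(G)$ we may assume $O_{p'}(G)=1$; writing $V=O_p(G)$ and factoring out $\Phi(V)$ we may assume $V$ elementary abelian; then $V=C_G(V)$ by \cite[Theorem 6.3.2]{Gor}, so $\bar G:=G/V$ acts faithfully on $V$ and $O_{p'p}(G)=V$. Hence it suffices to bound the exponent of the image $\bar N$ of $N$ in $\bar G$. Fix a potent filtration $(N_i)_{i=1}^{k}$ of type $p-1$ with $N_1=N$; by Proposition \ref{filtration}(3) each $(N_i^{\,p})_i$ is again such a filtration, and every tail of it is a potent filtration of type $p-1$ starting at the corresponding $N_j^{\,p}$, which will let me induct on the length $k$. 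Passing through a Jordan-Holder series of $V$ as an $\mathbb{F}_p[\bar G]$-module one may also assume $V$ simple.

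The core is linear-algebraic. For $n\in N_i$ and $v\in V$ the commutator calculus together with Proposition \ref{filtration}(1) gives
\[
[v,{}_{p-1}n]\in[V,{}_{p-1}N_i]\subseteq[N_i,{}_{p-1}P]\subseteq N_{i+1}^{\,p},\qquad [N_j^{\,p},n]\subseteq[N_j,P]^{p}\subseteq N_{j+1}^{\,p},
\]
so on $V$ the operator $(T(n)-1_V)^{p-1}$ has image in $N_{i+1}^{\,p}$, while $(T(n)-1_V)$ moves $N_j^{\,p}$ into $N_{j+1}^{\,p}$. The fact I would extract from \cite[Corollary 5.2]{JW} is that an element $x\in G$ acting on $V$ with $(T(x)-1_V)^{p-1}=0$ has Jordan blocks of size at most $p-1$, which by the Hall--Higman inequality is incompatible with $x$ having order at least $p^2$ modulo $V$; hence $x^p\in V$. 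Running a downward induction along the filtration, using the induction on $k$ to keep the internal filtration $N_{i+1}^{\,p}\supseteq N_{i+2}^{\,p}\supseteq\cdots$ under control, should give $N_i^{\,p}\subseteq V$ for every $i$, hence $N^p\subseteq V=O_{p'p}(G)$, which is case (a). For $p=2$, ``type $1$'' reads $[N_i,P]\subseteq N_{i+1}^{2}$, so if $N_{i+1}\subseteq V$ then $[N_i,P]\subseteq V^{2}=1$ and, as $V\le P$, $N_i\subseteq C_G(V)=V$; the downward induction closes with the stronger $N\subseteq V$, which is case (c). For $p=3$, $p-1=2$ is exactly the borderline of the Hall--Higman estimate, where a non-trivial element of order $3$ may act with minimal polynomial $(X-1)^{2}$; the argument of case (a) then only yields $N_i^{3}\subseteq V$, and one feeds the potent filtration $(N_i^{3})_i$ of type $2$ back into the same argument to get $N^{9}\subseteq V=O_{p'p}(G)$, which is case (b).

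The step I expect to be the real obstacle is making the downward induction close in case (a). Because $[V,{}_{p-1}N_i]$ only lands in $N_{i+1}^{\,p}$ and not in $1$, one cannot conclude $(T(n)-1_V)^{p-1}=0$ outright, and the crude bound on the minimal polynomial of $T(n)$ picks up a spurious contribution of length roughly $k-i$ coming from the inner filtration of $N_{i+1}^{\,p}$; taming it is precisely where the induction on $k$, the simplicity of $V$, and Proposition \ref{filtration} have to be orchestrated together. The prime $p=3$, being the smallest odd prime and the one with the tightest Hall--Higman inequality, is the case in which this costs the extra factor of $p$, whereas for $p=2$ the hypothesis is already strong enough that no loss occurs.
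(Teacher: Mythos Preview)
Your reductions and case (c) are correct and essentially identical to the paper's. The genuine gap is in (a) and (b), precisely at the obstacle you flag yourself: from $[V,{}_{p-1}N_i]\subseteq N_{i+1}^{\,p}$ you can only deduce that $(T(n)-1_V)^{(p-1)+(k-i-1)}=0$, so the degree bound on the minimal polynomial of $T(n)$ grows with the length of the tail of the filtration, and once it exceeds $p-1$ the Hall--Higman inequality no longer forces $n^{p}\in V$. Neither the reverse induction on $i$ nor an outer induction on $k$ (which amount to the same thing here, since the tail starting at $N_2$ is just a shorter filtration) removes this defect, and no orchestration of the ingredients you list will.

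The paper's device is to change variable: instead of $T(n)$ it studies $T(n^{p})$ in (a) and $T(n^{p^{2}})$ in (b). Under the inductive hypothesis $N_{i+1}^{\,p}\subseteq V$, Proposition~\ref{filtration} gives
\[
[v,n^{p},n^{p}]\in[V,N_i^{\,p},N_i^{\,p}]\subseteq[P,N_i,N_i]^{p^{2}}\subseteq N_{i+1}^{\,p^{2}}\subseteq V^{p}=1,
\]
so $(T(n^{p})-1_V)^{2}=0$; since $2\le p-2$ for $p\ge 5$, \cite[Corollary~5.2]{JW} forces $n^{p}$ to act trivially on $V$, and the reverse induction closes with no residual tail. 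For $p=3$ the paper carries the hypothesis $N_{i+1}^{\,p^{2}}\subseteq V$ and uses \cite[Theorem~2.4]{O} to obtain $[V,N_i^{\,p^{2}}]\subseteq[V,{}_{p^{2}}N_i]\subseteq N_{i+1}^{\,p^{3}}\subseteq V^{p}=1$, so $n^{p^{2}}$ centralises $V$ outright. The missing idea is thus not finer bookkeeping of your chain but the observation that passing from $n$ to $n^{p}$ (or $n^{p^{2}}$) lets the power--commutator interchange for PF-embedded subgroups manufacture an \emph{extra} factor of $p$, enough to land inside $V^{p}=1$ in a single step.
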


\begin{proof}
As in the proof of Proposition 3, we can assume that $O_{p'}(G)=1$ and  $O_{p}(G)=V$ is a simple $\F _{p}(G/V)$--module. Note that $V=C_{G}(V)$ (see \cite[Theorem 6.3.2]{Gor}). Take $(N_i)_{i=1}^k$ a potent filtration starting at $N$.

(a) We will prove by reverse induction on $i$ that $N_i^p\subseteq V$. For $i$ large enough it is clear. Let us suppose that $N_{i+1}^p\subseteq V$. Take $n^p\in N_i^p$ and $v\in V$. Then by Proposition \ref{filtration}
$$[v, n^p, n^p]\in [V, N_i^{p}, N_i^{p}]\subseteq [P, N_i, N_i]^{p^{2}}\subseteq N_{i+1}^{p^2}\subseteq V^p=1.$$
Therefore 
$$v(T(n^p)-1_V)^2=1.$$
Then, as in Proposition 3 and since $p\geq 5$, the Jordan block size it can only can be $1$. So $N_i^{p}\subseteq  C_G(V)=V$.

(b) As in (a) we prove by reverse induction on $i$ that $N_i^{p^2}\subseteq V$. For $i$ large enough it is clear. Let us suppose that $N_{i+1}^{p^2}\subseteq V$. Take $n^{p^2}\in N_i^{p^2}$ and $v\in V$.
Then, by \cite[Theorem 2.4]{O},
$$[v, n^{p^{2}}]\in [V, N_i^{p^{2}}]\subseteq  [V,_{p^{2}} N_i]\subseteq N_{i+1}^{p^{3}}\subseteq V^p=1.$$
Therefore $N_i^{p^2}\subseteq C_G(V)=V$.

(c)  As in (a) we prove by reverse induction on $i$ that $N_i\subseteq V$. For $i$ large enough, it is clear. Let us suppose that $N_{i+1}\subseteq V$. Take $n\in N_i$ and $v\in V$.
Then

$$[n,v]\in [N_i,V]\subseteq N_{i+1}^2\subseteq V^p=1.$$

Therefore $n\in C_G(V)=V$ and $N_i\subseteq V$.
\end{proof}

\section{Proof of the main theorem}

Now we introduce a family ${E_{k,r}}(P)$ of subgroups of a finite $p$-group $P$ as in  \cite{O}.
\begin{defn}
Let $P$ be a finite $p$-group. For any pair $k$, $r$ of positive integers, we define the subgroup
$$E_{k,r}(P)= \prod_{i+j(p-1)\geq k\ \text{and}\ i\geq r} \gamma _{i}(P)^{p^{j}}$$
\end{defn}

The next theorem is a stronger version of the Main Theorem.

\begin{theorem}
\label{main}
Let $G$ be a $p$-solvable group and let $P$ be a Sylow $p$-subgroup of $G$. Suppose that 
$\gamma _{\ell (p-1)}(P)\subseteq E_{\ell (p-1)+1,1}(P)$, then the $p$--length of $G$ is bounded in terms of $\ell$.
\end{theorem}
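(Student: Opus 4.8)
The plan is to reduce the hypothesis $\gamma_{\ell(p-1)}(P)\subseteq E_{\ell(p-1)+1,1}(P)$ to a statement about PF-embedded subgroups, and then iterate the Propositions from Section~\ref{sec:PF} to climb the upper $p'p$-series. First I would observe that the condition $\gamma_{\ell(p-1)}(P)\subseteq E_{\ell(p-1)+1,1}(P)$ is inherited by every section $\bar P$ of $P$: since $E_{k,r}$ is defined by products of $\gamma_i(\bar P)^{p^j}$ and the $\gamma_i$ and $p$-th powers commute with taking quotients, the hypothesis passes to Sylow $p$-subgroups of $G/O_{p'}(G)$, of $G/O_{p'p}(G)$, and so on. Thus it suffices to bound, in terms of $\ell$, the number of steps needed before the series stabilizes, by showing that at each stage a suitably large piece of $P$ is forced into $O_{p'p}$.

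The core step is to produce, from $\gamma_{\ell(p-1)}(P)\subseteq E_{\ell(p-1)+1,1}(P)$, a potent filtration of type $p-1$ (or type $p-2$) starting at a subgroup $N$ of $P$ that is "not too small" — explicitly, I expect $N$ to be $\gamma_{c}(P)$ or $\gamma_c(P)^{p^d}$ for constants $c,d$ depending only on $\ell$. The machinery for this should be exactly the $E_{k,r}$-calculus of \cite{O}: the subgroups $E_{k,r}(P)$ satisfy $[E_{k,r}(P),P]\subseteq E_{k+1,r+1}(P)$ and $E_{k,r}(P)^p\subseteq E_{k+p-1,r}(P)$, and the hypothesis $\gamma_{\ell(p-1)}(P)\subseteq E_{\ell(p-1)+1,1}(P)$ says precisely that raising the weight by $\ell(p-1)$ in the lower central series is "cheaper" than the generic bound, which by a pigeonhole/telescoping argument forces $[N,_{p-2}P]\subseteq N'^p$ (resp. type $p-1$) for an appropriate term $N$ of a filtration built from the $\gamma_i(P)$ and their $p$-powers. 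This is the step I expect to be the main obstacle: getting the inequality $\ell(p-1)<r+s(p-1)$ to translate, through the $E_{k,r}$ bookkeeping, into the exact commutator-power inclusion demanded by clause~(4) of the definition of a potent filtration of type $p-2$ or $p-1$, uniformly in the rank of $P$.

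Once such an $N$ is PF-embedded of type $p-2$ (if $p$ is large relative to $\ell$) or of type $p-1$, Proposition~3 (resp.\ Proposition~4) gives $N\subseteq O_{p'p}(G)$ (resp.\ $N^p\subseteq O_{p'p}(G)$ for $p\ge 5$, $N^{p^2}\subseteq O_{p'p}(G)$ for $p=3$, $N\subseteq O_{p'p}(G)$ for $p=2$). In all cases a subgroup of $P$ of bounded "co-size" — bounded index in the sense that $P/N$ or $P/N^{p}$ has class and exponent bounded in terms of $\ell$ — lies in $O_{p'p}(G)$. Passing to $\bar G=G/O_{p'p}(G)$, its Sylow $p$-subgroup $\bar P$ is a bounded section of $P$, still satisfies the hypothesis, and we repeat. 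Since each iteration kills a layer of $\bar P$ of class/exponent bounded by a function of $\ell$, and the $p$-length increases by at most one per iteration, after $f(\ell)$ steps $\bar P$ becomes trivial; hence the $p$-length of $G$ is at most $f(\ell)$. The final sentence of the statement — that $\gamma_{\ell(p-1)}(P)\subseteq P^{p^\ell}$ implies the hypothesis — is immediate, since $P^{p^\ell}=\gamma_1(P)^{p^\ell}\subseteq E_{\ell(p-1)+1,1}(P)$ because the generator index pair $(1,\ell)$ satisfies $1+\ell(p-1)\ge \ell(p-1)+1$ and $1\ge 1$.
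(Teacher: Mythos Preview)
Your plan shares the paper's core mechanism: use the $E_{k,r}$--calculus from \cite{O} to exhibit a PF--embedded subgroup of type $p-1$, and then apply Proposition~4 to force (a $p$--power of) it into $O_{p',p}(G)$. In the paper this is done in one stroke: one sets $E=E_{(\ell-1)(p-1),1}(P)$, cites \cite[Theorem~4.8]{O} to get that $E$ is PF--embedded of type $p-1$ (so the step you flagged as the ``main obstacle'' is in fact imported wholesale), and concludes $E^{p^2}\subseteq O_{p',p}(G)$. Since $P^{p^{\ell-1}}\subseteq E$ always, every $x\in P$ satisfies $x^{p^{\ell+1}}=(x^{p^{\ell-1}})^{p^2}\in E^{p^2}$, so the Sylow $p$--subgroup of $G/O_{p',p}(G)$ has exponent at most $p^{\ell+1}$. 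The paper then finishes by invoking Hall--Higman's Theorem~A \cite{HH}, which bounds the $p$--length in terms of the exponent of a Sylow $p$--subgroup.

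The genuine gap in your proposal is the termination of the iteration. You write that ``each iteration kills a layer of $\bar P$ of class/exponent bounded by a function of $\ell$, and \dots\ after $f(\ell)$ steps $\bar P$ becomes trivial.'' But killing a layer of bounded size does not bound the number of layers unless some global invariant of $P$ is bounded to begin with, and none is. Concretely: after one pass the Sylow of $G/O_{p',p}(G)$ has exponent $\le p^{\ell+1}$; applying your step again to that quotient yields a Sylow whose exponent is again $\le p^{\ell+1}$, not strictly smaller, and no bound on class or order emerges from the hypothesis alone. So the recursion does not visibly terminate. The paper sidesteps this entirely by appealing to Hall--Higman after a \emph{single} pass; if you want to avoid \cite{HH} you would need an independent argument that bounded exponent together with the hypothesis forces bounded $p$--length, which is essentially Hall--Higman again. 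A minor side remark: the PF--embedded subgroup is $E_{(\ell-1)(p-1),1}(P)$ itself, not a single $\gamma_c(P)^{p^d}$ as you anticipated.
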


\begin{proof}
Put $E=E_{(\ell-1)(p-1),1}(P)$. By \cite[Theorem 4.8]{O} the subgroup $E$ is PF--embedded of type $p-1$ in P. So $E^{p^2}\subseteq O_{p',p}(G)$.

By construction the exponent of $P/O_{p',p}(G)$ is at most the exponent of $P/E^{p^2}$ which is bounded by $p^{\ell+1}$. Therefore by Theorem A of \cite{HH} the $p$-length of $G$ is bounded in terms of $\ell$.
\end{proof}

\section{A question and a lemma}

In Section \ref{sec:PF} we have proved that if $G$ is a $2$-solvable group, $P$ is a Sylow $2$-subgroup of $P$ and $N$ is a PF-embedded subgroup of $P$, then $N\subseteq O_{p,p^\prime} (G)$. For the case were $p$ is an odd prime we have only a weaker version of the result.  Therefore the following question arises naturally.

\begin{que}
\label{Q1}
Let $G$ be a $p$-solvable group, $P$ is a Sylow $p$-subgroup of $P$ and $N$ is a PF-embedded subgroup of $P$. Then $N\subseteq O_{p,p^\prime} (G)$.
\end{que}

A positive answer to this question will provide an improvement of the implicit bound in Theorem \ref{main}. The following lemma could be helpful. It is a generalization of  \cite[Theorem 6.3.2]{Gor}.

\begin{lem}
Let $G$ be a $p$-solvable group such that $O_{p'}(G)=1$. Let $N$ be a normal subgroup of $G$ such that there exists an integer $l$  for which $[O_p(G),_{l}N]=1$. Then $N\subseteq O_p(G)$.
\end{lem}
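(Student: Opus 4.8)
The plan is to mimic the structure of the proof of \cite[Theorem 6.3.2]{Gor} (which is the special case $l=1$), reducing to the situation where $V := O_p(G)$ is self-centralizing and then forcing $N$ into $V$ by a faithful-action argument. First I would reduce to the case $C_G(V) = V$: since $O_{p'}(G)=1$, the Hall--Higman lemma \cite[Theorem 6.3.2]{Gor} gives $C_G(O_p(G)) \subseteq O_p(G)$, so in fact $C_G(V) = V$. Hence it suffices to show $N \subseteq C_G(V)$, i.e. that $[V,N]=1$.

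Next I would bring in the hypothesis $[V,{}_l N] = 1$. Passing to $\bar G = G/\Phi(V)$ (note $\Phi(V)$ is normal in $G$), we may assume $V$ is elementary abelian and regard it as an $\F_p[\bar G]$-module on which $\bar G/V$ acts faithfully; the hypothesis $[V,{}_l N]=1$ says exactly that every $n\in N$ acts on $V$ as a linear transformation $T(n)$ with $(T(n)-1_V)^l = 0$, i.e. $T(n)$ is unipotent. So each element of $NV/V$ is a unipotent element of $GL(V)$ of bounded unipotency class. The key point to extract is that the image of $N$ in $GL(V)$ is a normal $p$-subgroup: it is generated by unipotent (hence $p$-)elements and is normal, but a normal subgroup generated by $p$-elements need not a priori be a $p$-group unless we know more — here $N$ is itself a subgroup, and a group all of whose elements are $p$-elements... is not automatically a $p$-group either. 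The cleanest route: $N$ normal in the $p$-solvable group $G$, $[V,{}_lN]=1$, and we want $[V,N]=1$.

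The main obstacle, then, is to upgrade "$N$ acts on $V$ by bounded-class unipotents" to "$N$ acts trivially on $V$". I would argue as follows: since $G$ is $p$-solvable and $O_{p'}(G)=1$, $V = O_p(G)$ contains its own centralizer, so $O_p(G/V)=1$; thus $NV/V$, being a normal subgroup of $G/V$ that acts as unipotent transformations, must intersect $O_p(G/V)$ appropriately — more precisely, I would show the image $\overline N$ of $N$ in $GL(V)$ generates a normal $p$-subgroup $Q$ of $G/V$ (using that a normal subgroup of a linear group over $\F_p$ consisting of unipotent elements generates a unipotent, hence nilpotent $p$-, subgroup — this is where one invokes that unipotent elements of $GL_n(\F_p)$ are $p$-elements together with the fact that $\overline N$ is normalized by $\bar G$, so its normal closure is $\overline N$ itself and it is generated by $p$-elements; one then needs a theorem to the effect that a group generated by unipotent elements and normalized by enough is unipotent). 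Since $O_p(G/V)=1$ forces $Q=1$, we get $\overline N = 1$, i.e. $[V,N]=1$, hence $N\subseteq C_G(V)=V = O_p(G)$, as desired. The delicate step is justifying that the normal closure structure plus bounded unipotency class gives a genuine $p$-subgroup; if a clean black-box is unavailable one falls back on the Hall--Higman $p$-length-$1$ criterion applied to $\langle N,V\rangle$ or on an induction on $l$ stripping off one layer $[V,{}_{l-1}N]$ at a time, which reduces the unipotency bound and eventually lands in the $l=1$ case handled by \cite[Theorem 6.3.2]{Gor}.
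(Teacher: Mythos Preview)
Your overall strategy is sound and would yield a proof, but you misidentify where the difficulty lies. The step you call ``delicate''---showing that $NV/V$ is a $p$-group---is in fact immediate: you have already observed that every element of $NV/V$ acts on $V$ unipotently, hence as a $p$-element of $\mathrm{Aut}(V)$; since the action is faithful, every element of the finite group $NV/V$ has $p$-power order, and Cauchy's theorem then forces $NV/V$ to be a $p$-group. No black box, Hall--Higman criterion, or induction on $l$ is needed. Indeed the reduction to $V$ elementary abelian is unnecessary too: for any $n\in N$ one has $[V,{}_{l}\, n]=1$, and if some power $n'$ of $n$ had nontrivial $p'$-order then the coprime-action identity $[V,n']=[V,n',n']$ iterated gives $[V,n']=[V,{}_{l}\, n']=1$, so $n'\in C_G(V)\subseteq V$, forcing $n'=1$. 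Thus every element of $NV/V$ is a $p$-element, and you are done as before. Your hesitation (``a group all of whose elements are $p$-elements is not automatically a $p$-group'') is misplaced for finite groups.

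The paper takes a different route. It does not pass to an elementary abelian quotient and does not argue elementwise via unipotency. Instead, with $p^r$ the exponent of $V$, it applies the commutator--power expansion of \cite[Theorem~2.4]{O} to get $[V,M^{p^{r+l}}]=1$: each term $[V,{}_{p^i}M]^{p^{r+l-i}}$ in the expansion vanishes because either $p^i\ge l$ (so the iterated commutator dies by hypothesis) or $r+l-i\ge r$ (so the $p$-power kills a subgroup of $V$). A $p'$-Hall subgroup $H$ then satisfies $H=H^{p^{r+l}}$, whence $[V,H]=1$ and $H\subseteq C_G(V)\subseteq V$, forcing $H=1$; hence the group in question is a $p$-group and lies in $O_p(G)$. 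Your approach is the more elementary one (only Cauchy and the standard coprime-action lemma are needed), while the paper's argument ties the lemma to the same $p$-power--commutator calculus used throughout.
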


\begin{proof}
Put $V=O_p(G)$ and denote by $p^r$ the exponent of $V$. Notice that $O_{p^\prime}(N)=1$ and put $M=O_{p^\prime ,p}(N)$. 
Then by \cite[Theorem 2.4]{O}
$$[V, M^{p^{r+l}}]\subseteq [V,M]^{p^{r+l}} \prod_{i=1}^{r+l} [V,_{p^i}M]^{p^{r+l-i}}.$$
In the previous equation either $p^i\geq l$ or $r+l-i\geq r$. Therefore $[V, M^{p^{r+l}}]=1$.

Let $H$ be a nontrivial $p^\prime$-Hall subgroup of $M$. Then
$$[V, H]=[V, H^{p^{r+l}}]\subseteq [V, M^{p^{r+l}}]=1.$$
Therefore $H$ centralizes $V$ and $M=H\times V$. In particular $H\subseteq O_{p'}(G)=1$ and therefore $O_{p^\prime ,p}(N)=O_p(N)$ which implies that $N\subseteq O_p(G)$.
\end{proof}


\begin{thebibliography}{ab}


\bibitem{O}Gustavo A. Fernandez-Alcober, Jon Gonzalez-Sanchez, Andrei Jaikin-Zapirain
Omega subgroups of pro-$p$ groups, Isr. J. Math. 166, 393-412, 2008.

%\bibitem{Pot}Gonzalez-Sanchez, J.,Jaikin-Zapirain, A.  
%On the structure of normal subgroups of potent p-groups, J. Algebra 276 (2004), 193-209.

\bibitem{Gor}Gorenstein, D.E.
Finite Groups, Chelsea Publishing Company, 1980.

\bibitem{JW}J. Gonzalez-Sanchez and T. S. Weigel, 
Finite p-central groups of height k, Isr. J. Math. 181 (2011), 125-143.

\bibitem{HH}P. Hall and G. Higman, 
The p-length of a p-solvable group and reduction theorems for Burnside's problem, Proc. London Math. Soc. (3) 6 (1956), 1-42.

\bibitem{Ka}Khukhro, E.I.
On p-solvable groups with a generalized p-central or powerful Sylow p-subgroup, International Journal of Group Theory, vol.1, Num.2, 51-57 (7), June 2012.

%\bibitem{Kk}Khukhro, E.I. 
%p-Automorphisms of finite p-groups, Cambridge University Press, 1998.

\end{thebibliography}
\end{document}